\documentclass[preprint,12pt,times]{elsarticle}

\usepackage{amssymb}
\usepackage{amsmath}
\usepackage{amsthm}
\usepackage{mathtools}
\numberwithin{equation}{section}
\usepackage{geometry}
\usepackage{setspace}
\usepackage{xcolor}
\usepackage{hyperref}
\usepackage[nameinlink,capitalise]{cleveref}
\hypersetup{
  colorlinks=true,
  linkcolor=blue!55!black,
  citecolor=blue!55!black,
  urlcolor=blue!55!black,
  pdftitle={Metrically bounded Volterra-type operators on area Nevanlinna spaces},
  pdfauthor={Cezhong Tong, Maofa Wang, Zixing Yuan}
}

\newtheorem{theorem}{Theorem}[section]

\newtheorem{proposition}[theorem]{Proposition}
\newtheorem{lemma}[theorem]{Lemma}

\crefname{theorem}{Theorem}{Theorems}
\Crefname{theorem}{Theorem}{Theorems}
\crefname{corollary}{Corollary}{Corollaries}
\Crefname{corollary}{Corollary}{Corollaries}
\crefname{proposition}{Proposition}{Propositions}
\Crefname{proposition}{Proposition}{Propositions}
\crefname{lemma}{Lemma}{Lemmas}
\Crefname{lemma}{Lemma}{Lemmas}
\crefalias{corollary}{corollary}
\crefalias{proposition}{proposition}
\crefalias{lemma}{lemma}

\newcommand{\D}{\mathbb D}
\newcommand{\norm}[1]{\left\lVert #1\right\rVert}
\newcommand{\mb}{\mathrm{mb}}

\begin{document}

\begin{frontmatter}

\title{Metrically Bounded Volterra-Type Operators\\ on Area Nevanlinna Spaces}

\author[aff1]{Zixing Yuan}
\ead{yuan980127@163.com}

\affiliation[aff1]{organization={Institute of Mathematics, Hebei University of Technology},
city={Tianjin}, postcode={300401}, country={China}}

\begin{abstract}
In this paper, we study metrically bounded Volterra-type operators $J_g$ and $I_g$
on the area Nevanlinna spaces $N_\alpha^p$, where $1\le p<\infty$ and $\alpha>-1$.
Choe, Koo and Smith \cite{CKS} obtained partial characterizations of the corresponding symbol classes, but the exact characterization was still open. We establish a Littlewood--Paley type characterization of $N_\alpha^p$ and use it to resolve this problem affirmatively. More precisely, $J_g$ is metrically bounded on
$N_\alpha^p$ if and only if $g$ belongs to the Bloch space $\mathcal B$, while
$I_g$ is metrically bounded on $N_\alpha^p$ if and only if $g\in H^\infty$.
\end{abstract}

\begin{keyword}
Area Nevanlinna space \sep metrically bounded operator \sep Volterra-type operator

\MSC[2020] 32A35 \sep 47G10 \sep 47B38
\end{keyword}

\end{frontmatter}

\thispagestyle{plain}

\section{Introduction}\label{sec:introduction}
The classical Nevanlinna class is defined in terms of boundary means of $\log^+|f|$, while area Nevanlinna spaces are defined by replacing these boundary means with weighted area integrals over the unit disk.
 The operator theoretic properties of area Nevanlinna spaces, including  composition operators, weighted composition operators, and integral operators, as well as the algebraic structure of these spaces, have been extensively studied.
 Xiao \cite{Xiao} studied compact composition
operators on the area Nevanlinna class. Haldimann and Jarchow \cite{HJ}
investigated weighted Nevanlinna algebras, while Jarchow and Xiao \cite{JX}
studied  composition operators between weighted Nevanlinna and Bergman
spaces. Choe, Koo and Smith \cite{CKS} characterized Carleson measures for area Nevanlinna spaces, with applications to composition operators and Volterra-type operators. For further related results, see  \cite{WangVector,WangLiu,YZ} and the references therein.

Let $\D=\{z\in\mathbb C:|z|<1\}$, and let $H(\D)$ denote the space of
analytic functions on $\D$. For $1\le p<\infty$ and $\alpha>-1$, 
 the area Nevanlinna space $N_\alpha^p$ consists of all
$f\in H(\D)$ such that
\begin{equation}\label{eq:Npa-functional}
	\norm{f}_{N_\alpha^p}
	=\left\{\int_\D[\log(1+|f(z)|)]^p\,dA_\alpha(z)\right\}^{1/p}<\infty,
\end{equation}
where $dA_\alpha(z)=(1+\alpha)(1-|z|^2)^\alpha dA(z)$ and
$dA(z)=dx\,dy/\pi$ is the normalized area measure.

Notice that \eqref{eq:Npa-functional} is not a norm in the usual sense.
We nevertheless use the notation
$\norm{\cdot}_{N_\alpha^p}$ for convenience. Since
$\log(1+x+y)\le \log(1+x)+\log(1+y)$ for $x,y\ge0$, Minkowski's
inequality shows that \eqref{eq:Npa-functional} induces the
translation-invariant metric
\[
d(f,g)=\norm{f-g}_{N_\alpha^p},\qquad f,g\in N_\alpha^p.
\]
A linear operator $T$ on $N_\alpha^p$ is called \emph{metrically bounded} if
there exists $C>0$ such that
\[
 \norm{Tf}_{N_\alpha^p}\le C\norm{f}_{N_\alpha^p},
 \qquad f\in N_\alpha^p.
\]

For $g\in H(\D)$, the Volterra-type operators $J_g$ and $I_g$ are defined by
\begin{equation}\label{eq:operators}
 J_gf(z)=\int_0^z f(\zeta)g'(\zeta)\,\mathrm{d}\zeta,
 \qquad
 I_gf(z)=\int_0^z f'(\zeta)g(\zeta)\,\mathrm{d}\zeta,
\end{equation}
for $z\in\D$ and $f\in H(\D)$. The discussion of Volterra-type operators
$J_g$ and $I_g$ first arose in connection with semigroups of composition
operators, for further details and background, see
\cite{ASHp,AS,Miihkinen,Pommerenke,SiskakisSurvey,SiskakisZhao}. In this
paper, we focus on the metric boundedness of \eqref{eq:operators} on
$N_\alpha^p$.

Define
\[
\begin{aligned}
 J_{\mb}[N_\alpha^p]
 &=\{g\in H(\D):J_g\text{ is metrically bounded on }N_\alpha^p\},\\
 I_{\mb}[N_\alpha^p]
 &=\{g\in H(\D):I_g\text{ is metrically bounded on }N_\alpha^p\}.
\end{aligned}
\]
We also denote by $H^\infty$ the space of bounded analytic functions on $\mathbb D$, and by
$\mathcal B=\{g\in H(\D):\sup_{z\in\D}(1-|z|^2)|g'(z)|<\infty\}$
the Bloch space.

Choe, Koo and Smith \cite[Proposition~4.3]{CKS} proved that
\begin{equation}\label{eq:known-symbol-inclusions}
 H_1^\infty\subset J_{\mb}[N_\alpha^p]\subset\mathcal B,
 \qquad
 H_1^\infty\subset I_{\mb}[N_\alpha^p]\subset H^\infty,
\end{equation}
where $H_1^\infty=\{g\in H^\infty:g'\in H^\infty\}$.
In attempting to identify the symbol classes in
\eqref{eq:known-symbol-inclusions}, they were led in
\cite[Remarks~4.4(3)]{CKS} to ask whether the additive constant in their
estimate could be dropped. Equivalently, whether
\begin{equation}\label{eq:open-homogeneous}
 \norm{f-f(0)}_{N_\alpha^p}^{p}
 \le C_{p,\alpha}
 \int_\D
 \bigl[\log(1+(1-|z|)|f'(z)|)\bigr]^p\,dA_\alpha(z),
 \qquad f\in H(\D)
\end{equation}
holds. They had already proved the reverse estimate
\begin{equation}\label{eq:CKS-direct}
 \int_\D
 \bigl[\log(1+(1-|z|)|f'(z)|)\bigr]^p\,dA_\alpha(z)
 \le C_{p,\alpha}\norm{f-f(0)}_{N_\alpha^p}^p
\end{equation}
in \cite[Remarks~4.4(3)]{CKS}, as a consequence of their estimate~(3.1).
Thus, together with \eqref{eq:CKS-direct}, an affirmative answer to
\eqref{eq:open-homogeneous} would yield a Littlewood-Paley type
characterization of $N_\alpha^p$. Our first main result gives such an
affirmative answer.

\begin{theorem}\label{thm:main}
For $1\le p<\infty$ and $\alpha>-1$. There exists a constant
$C=C(p,\alpha)\ge1$ such that, for every $f\in H(\D)$,
\begin{equation}\label{eq:main-two-sided}
 C^{-1}\norm{f-f(0)}_{N_\alpha^p}^{p}
 \le
 \int_\D
 \bigl[\log(1+(1-|z|)|f'(z)|)\bigr]^{p}\,dA_\alpha(z)
 \le
 C\norm{f-f(0)}_{N_\alpha^p}^{p}.
\end{equation}
\end{theorem}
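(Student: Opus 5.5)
The second inequality in \eqref{eq:main-two-sided} is already \eqref{eq:CKS-direct}, which Choe, Koo and Smith derived from their estimate~(3.1); I take it as known. So the work is the first inequality, namely \eqref{eq:open-homogeneous}. Replacing $f$ by $f-f(0)$, I assume $f(0)=0$. I write $\varphi(z)=(1-|z|)|f'(z)|$ and $u(z)=\log(1+\varphi(z))$. The goal is
\[
\int_\D[\log(1+|f|)]^p\,dA_\alpha\le C\int_\D u^p\,dA_\alpha .
\]
The strategy is to integrate $f'$ along the radius, cut the radius into dyadic pieces, pass the logarithm through the resulting sum with no additive constants, and finish with a Hardy-type inequality.

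\textbf{Step 1 (radial dyadic decomposition).} Fix $z=r\zeta$ with $|\zeta|=1$. Then $|f(z)|\le\int_0^r\varphi(s\zeta)\,\frac{ds}{1-s}$. I split $[0,r]$ into the intervals $I_k=[1-2^{-k},1-2^{-k-1}]\cap[0,r]$, $0\le k\le N\approx\log_2\frac1{1-r}$. On each $I_k$ we have $\int_{I_k}\frac{ds}{1-s}\le \log 2$, so $|f(z)|\le \log 2\sum_{k\le N}M_k(\zeta)$, where $M_k(\zeta)=\sup_{s\in I_k}\varphi(s\zeta)$.

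I then use two elementary facts:
\[
\log\Bigl(1+\sum a_k\Bigr)\le\sum\log(1+a_k)\quad(a_k\ge 0),\qquad \log(1+cx)\le c\log(1+x)\quad(c\ge1,\ x\ge0).
\]
These give $\log(1+|f(z)|)\le C\sum_{k\le N}\log(1+M_k(\zeta))$. The second fact matters: the constants enter multiplicatively, never additively. An additive $\log C$ in each of the $N\sim\log\frac1{1-r}$ terms would create an unbounded error, and that is exactly the additive constant \cite{CKS} could not remove.

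\textbf{Step 2 (sub-mean value).} Let $Q_k(\zeta)$ be the Whitney box containing $\{s\zeta:s\in I_k\}$, and $\widetilde Q_k(\zeta)$ a fixed hyperbolic enlargement of it. For $w$ on the radial segment and $z$ in a fixed hyperbolic disc $D(w)$, we have $1-|z|\asymp1-|w|$. Hence $\varphi(w)\le \log(1+c(1-|w|)|f'(w)|)$-type comparisons hold up to multiplicative constants. Moreover $z\mapsto\log(1+(1-|w|)|f'(z)|)$ is subharmonic, since $\log(1+|h|)$ is subharmonic for analytic $h$. Applying the sub-mean value property on $D(w)$, and again using $\log(1+cx)\le c\log(1+x)$, yields
\[
\log(1+M_k(\zeta))\le C\,\frac{1}{|\widetilde Q_k|}\int_{\widetilde Q_k(\zeta)}u\,dA .
\]
Summing over $k\le N$ gives
\[
\log(1+|f(z)|)\le C\int_{\Gamma(z)}u(w)\,\frac{dA(w)}{(1-|w|)^2},
\]
where $\Gamma(z)$ is the union of the enlarged boxes along the segment $[0,z]$. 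This is a truncated ``reverse cone'': $w\in\Gamma(z)$ forces $|w|\lesssim|z|$ and $z\in S(w)$, a Carleson box at $w$ of size $\asymp1-|w|$.

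\textbf{Step 3 (Hardy inequality), the main obstacle.} It remains to prove
\[
\int_\D\Bigl(\int_{\Gamma(z)}u\,\frac{dA(w)}{(1-|w|)^2}\Bigr)^p dA_\alpha(z)\le C\int_\D u^p\,dA_\alpha .
\]
For $p=1$ this follows from Fubini, because $A_\alpha(S(w))\asymp(1-|w|)^{2+\alpha}$. For $p>1$ I would apply Hölder with the auxiliary weight $(1-|w|)^{-\varepsilon}$ (a Schur test). One needs
\[
\int_{\Gamma(z)}(1-|w|)^{-2-\varepsilon p'}\,dA(w)\lesssim(1-|z|)^{-\varepsilon p'}
\]
and, dually,
\[
\int_{S(w)}(1-|z|)^{\alpha-\varepsilon p}\,dA(z)\lesssim(1-|w|)^{2+\alpha-\varepsilon p}.
\]
The first holds because $\Gamma(z)$ contains about one box per dyadic scale, giving a geometric sum dominated by the last term. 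The second requires $\alpha-\varepsilon p>-1$. Both are satisfied by any small $\varepsilon>0$, using $\alpha>-1$. The delicate part is checking the geometry of $\Gamma(z)$ and $S(w)$ carefully enough that these two Schur estimates hold uniformly; this is the main point of the argument.

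Combining Steps 1–3 gives \eqref{eq:open-homogeneous}, and together with \eqref{eq:CKS-direct} this proves \eqref{eq:main-two-sided}.
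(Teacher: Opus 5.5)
Your argument is correct, and its skeleton matches the paper's proof. Both integrate $f'$ radially over the dyadic intervals $I_k$. Both use $\log(1+\sum_k a_k)\le\sum_k\log(1+a_k)$ together with $\log(1+\lambda s)\le\max\{1,\lambda\}\log(1+s)$, so constants enter only multiplicatively. Both pass to area integrals by sub-mean value estimates on Whitney-type regions, and both finish with a Hardy-type inequality.

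The difference is the order of the last two steps, and therefore the key lemma.

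\textbf{The paper's order.} It applies Hardy first, in discrete one-dimensional form along each ray (Lemma~\ref{lem:weighted-hardy}, via Young's inequality). This reduces everything to $\sum_k 2^{-k(\alpha+1)}\int_0^{2\pi}b_k(\theta)^p\,d\theta$. Only then does it average, applying the sub-mean value inequality directly to $[\log(1+2^{-k}|f'|)]^p$; this is why it needs Lemma~\ref{lem:subharmonic-log} for general $p$. The remaining planar geometry is just the tube count $|\{\theta:w\in T_{k,\theta}\}|\lesssim 2^{-k}$ and the bounded overlap of the annuli $\widehat A_k$.

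\textbf{Your order.} You average first, which needs only the subharmonicity of $\log(1+|h|)$. This yields the pointwise bound $\log(1+|f(z)|)\lesssim\int_{\Gamma(z)}u\,(1-|w|)^{-2}\,dA(w)$. You must then prove $L^p(dA_\alpha)$-boundedness of a two-dimensional cone operator. Your two Schur estimates do hold for any $0<\varepsilon<(\alpha+1)/p$:
\begin{itemize}
\item $\Gamma(z)$ consists of one box of area $\asymp 2^{-2k}$ at each scale $2^{-k}\gtrsim 1-|z|$;
\item $\{z:w\in\Gamma(z)\}$ lies in a Carleson box of side $\asymp 1-|w|$.
\end{itemize}
So the step you flag as the main obstacle is routine.

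\textbf{What each buys.} The paper's ordering keeps the two-dimensional geometry trivial. Yours isolates a reusable pointwise domination of $\log(1+|f|)$ by a positive cone operator.

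You could also keep your ordering and skip the Schur test. Apply Lemma~\ref{lem:weighted-hardy} to the box averages $a_k(\zeta)=|\widetilde Q_k|^{-1}\int_{\widetilde Q_k(\zeta)}u\,dA$. Then use Jensen's inequality $a_k^p\le|\widetilde Q_k|^{-1}\int_{\widetilde Q_k(\zeta)}u^p\,dA$ and Tonelli in $\zeta$. This also shows that subharmonicity of the $p$-th power is not actually needed for this theorem.
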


Theorem~\ref{thm:main} also shows that the lower inclusions in
\eqref{eq:known-symbol-inclusions} are strict. Indeed, let
$g_1(z)=\log\frac{1}{1-z}$. Since
$
 (1-|z|)|g_1'(z)|\le1
$
and $(J_{g_1}f)'=fg_1'$, Theorem~\ref{thm:main} gives
\[
 \norm{J_{g_1}f}_{N_\alpha^p}^{p}
 \lesssim
 \int_\D[\log(1+|f(z)|)]^p\,dA_\alpha(z)
 =\norm{f}_{N_\alpha^p}^{p}.
\]
Hence $g_1\in J_{\mb}[N_\alpha^p]$, while $g_1\notin H_1^\infty$.
Similarly, for $0<\gamma<1$, let $g_2(z)=(1-z)^\gamma$. Since
$g_2\in H^\infty$ and $(I_{g_2}f)'=f'g_2$, Theorem~\ref{thm:main} yields
\[
 \norm{I_{g_2}f}_{N_\alpha^p}^{p}
 \lesssim
 \int_\D
 \bigl[\log(1+(1-|z|)|f'(z)|)\bigr]^p\,dA_\alpha(z)
 \lesssim \norm{f}_{N_\alpha^p}^{p}.
\]
On the other hand,
$|g_2'(r)|=\gamma(1-r)^{\gamma-1}\to\infty$ as $r\to1^-$, so
$g_2\notin H_1^\infty$. Consequently,
\[
 H_1^\infty\subsetneq J_{\mb}[N_\alpha^p],
 \qquad
 H_1^\infty\subsetneq I_{\mb}[N_\alpha^p].
\]
It is therefore natural to ask whether the upper inclusions in
\eqref{eq:known-symbol-inclusions} are equalities. Our second main result
gives an affirmative answer.

\begin{theorem}\label{thm:symbol-classes}
For $1\le p<\infty$ and $\alpha>-1$,
\[
 J_{\mb}[N_\alpha^p]=\mathcal B,
 \qquad
 I_{\mb}[N_\alpha^p]=H^\infty.
\]
\end{theorem}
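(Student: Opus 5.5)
The upper inclusions $J_{\mb}[N_\alpha^p]\subset\mathcal B$ and $I_{\mb}[N_\alpha^p]\subset H^\infty$ are already known from \eqref{eq:known-symbol-inclusions}, due to \cite[Proposition~4.3]{CKS}. So it remains to prove the reverse inclusions $\mathcal B\subset J_{\mb}[N_\alpha^p]$ and $H^\infty\subset I_{\mb}[N_\alpha^p]$. Both follow from Theorem~\ref{thm:main}, by the same reasoning used above for $g_1$ and $g_2$.

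For $J_g$, fix $g\in\mathcal B$ and put $M=\sup_{z\in\D}(1-|z|^2)|g'(z)|$. Since $J_gf(0)=0$ and $(J_gf)'=fg'$, the left inequality of \eqref{eq:main-two-sided} gives
\[
\norm{J_gf}_{N_\alpha^p}^p\le C\int_\D\bigl[\log(1+(1-|z|)|f(z)||g'(z)|)\bigr]^p\,dA_\alpha(z).
\]
Here $(1-|z|)|g'(z)|\le M$. We then use the elementary inequality $\log(1+Mx)\le \max(1,M)\log(1+x)$ for $x\ge0$, which holds by concavity: for $M\ge1$, $\log(1+Mx)\le M\log(1+x)$ because $(1+x)^M\ge 1+Mx$. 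This yields
\[
\norm{J_gf}_{N_\alpha^p}^p\le C\max(1,M)^p\norm{f}_{N_\alpha^p}^p,
\]
so $J_g$ is metrically bounded.

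For $I_g$, fix $g\in H^\infty$. We have $I_gf(0)=0$ and $(I_gf)'=f'g$, and $(I_gf)'$ depends only on $f-f(0)$. Applying the left inequality of \eqref{eq:main-two-sided}, then the same elementary inequality with $M=\norm{g}_\infty$, and then the right inequality of \eqref{eq:main-two-sided}, gives
\[
\norm{I_gf}_{N_\alpha^p}^p\lesssim \int_\D\bigl[\log(1+(1-|z|)|f'(z)|)\bigr]^p\,dA_\alpha(z)\lesssim \norm{f-f(0)}_{N_\alpha^p}^p.
\]
Finally, $\norm{f-f(0)}_{N_\alpha^p}\le \norm{f}_{N_\alpha^p}+\log(1+|f(0)|)$ by the triangle inequality for the metric, and subharmonicity of $\log(1+|f|)$ gives $\log(1+|f(0)|)\le \int_\D\log(1+|f|)\,dA_\alpha\le\norm{f}_{N_\alpha^p}$ by Hölder. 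So $\norm{f-f(0)}_{N_\alpha^p}\le 2\norm{f}_{N_\alpha^p}$, and $I_g$ is metrically bounded.

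There is no real obstacle at this stage, since the difficulty is entirely contained in Theorem~\ref{thm:main}. The only points needing care are the homogeneity-type bound for $\log(1+Mx)$ and the control of $|f(0)|$ by the $N_\alpha^p$ functional, which rests on subharmonicity of $\log(1+|f|)$.
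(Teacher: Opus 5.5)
Your proposal is correct and follows the paper's proof essentially step for step. You quote the upper inclusions from \cite[Proposition~4.3]{CKS}, and you get the lower ones from the left inequality of Theorem~\ref{thm:main}, the bound $\log(1+\lambda s)\le\max\{1,\lambda\}\log(1+s)$, and, for $I_g$, the right inequality together with $\norm{f-f(0)}_{N_\alpha^p}\le 2\norm{f}_{N_\alpha^p}$. The only cosmetic difference is how you control $\log(1+|f(0)|)$: you use subharmonicity of $\log(1+|f|)$ plus H\"older, whereas the paper uses subharmonicity of $[\log(1+|f|)]^p$ directly.
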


The paper is organized as follows. Section~\ref{sec:auxiliary} collects the
preliminary lemmas. Section~\ref{sec:derivative} proves  Theorems~\ref{thm:main} and \ref{thm:symbol-classes}.

Throughout the paper, we write \(A\lesssim B\) if \(A\le CB\) for some constant \(C>0\), and
\(A\asymp B\) if both \(A\lesssim B\) and \(B\lesssim A\) hold.

\section{Preliminaries}\label{sec:auxiliary}
The following lemmas needed in the proofs.
\begin{lemma}\label{lem:weighted-hardy}
Let $1\le p<\infty$ and $0<q<1$. For every nonnegative sequence $(x_k)_{k\ge0}$,
\[
 \sum_{N=0}^{\infty}q^{N}
 \left(\sum_{k=0}^{N}x_k\right)^{p}
 \le
 \frac{1}{(1-q^{1/p})^{p}}
 \sum_{k=0}^{\infty}q^{k}x_k^{p}.
\]
\end{lemma}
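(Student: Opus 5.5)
The plan is to prove this discrete weighted Hardy inequality by a Hölder (or convexity) splitting with geometrically varying weights, followed by an interchange of the order of summation. Put $\theta=q^{1/p}\in(0,1)$. For each fixed $N$, write $\sum_{k=0}^N x_k=\sum_{k=0}^N \theta^{(N-k)/p'}\cdot\theta^{-(N-k)/p'}x_k$. The natural route is Hölder with exponents $p,p'$. It is cleaner, however, to normalize the weights $w_k=\theta^{N-k}(1-\theta)$, whose sum over $0\le k\le N$ is at most $1$. Jensen's inequality, applied to the convex function $t\mapsto t^p$ with these subprobability weights, then gives
\[
\Bigl(\sum_{k=0}^N x_k\Bigr)^p=\Bigl(\sum_{k=0}^N w_k\,\frac{x_k}{w_k}\Bigr)^p\le \sum_{k=0}^N w_k\Bigl(\frac{x_k}{w_k}\Bigr)^p=(1-\theta)^{1-p}\sum_{k=0}^N \theta^{-(N-k)(p-1)}x_k^p .
\]
Here we use that for subprobability weights, $(\sum w_k y_k)^p\le(\sum w_k)^{p}\cdots\le \sum w_k y_k^p$, since $\sum w_k\le1$ and $p\ge1$. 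When $p=1$ the estimate is trivial with constant $1$.

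Next, multiply by $q^N=\theta^{pN}$ and sum over $N$, interchanging the order of summation by Tonelli since all terms are nonnegative. The coefficient of $x_k^p$ becomes
\[
(1-\theta)^{1-p}\sum_{N\ge k}\theta^{pN}\theta^{-(N-k)(p-1)}=(1-\theta)^{1-p}\theta^{pk}\sum_{m\ge0}\theta^{pm-(p-1)m}=(1-\theta)^{1-p}\,q^k\,\frac{1}{1-\theta},
\]
which gives exactly $\frac{q^k}{(1-q^{1/p})^p}$. This is the claimed constant.

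The only delicate point is the choice of the geometric weight exponent. The weight must be chosen so that the inner geometric series in $N$ converges, i.e.\ the ratio $\theta^{p}\theta^{-(p-1)}=\theta<1$, and also so that the $q^k$ factor is reproduced exactly. The choice $w_k\propto\theta^{N-k}$ with $\theta=q^{1/p}$ balances both requirements, and we verify the Jensen step carefully when the weights sum to less than one.
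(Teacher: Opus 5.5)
Your proof is correct and is essentially the paper's argument. The paper sets $h_j=q^{j/p}$, $y_k=q^{k/p}x_k$ and cites Young's inequality $\|h*y\|_{\ell^p}\le\|h\|_{\ell^1}\|y\|_{\ell^p}$; your Jensen step with weights $w_k=(1-\theta)\theta^{N-k}=h_{N-k}/\|h\|_{\ell^1}$, followed by Tonelli, is the standard proof of that inequality written out for this kernel. (Your garbled chain should read $(\sum w_k y_k)^p\le(\sum w_k)^{p-1}\sum w_k y_k^p\le\sum w_k y_k^p$, which is valid since $\sum_{k\le N}w_k=1-\theta^{N+1}\le 1$.)
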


\begin{proof}
Define sequences on $\mathbb Z$ by
\[
 y_k=\begin{cases}q^{k/p}x_k,&k\ge0,\\0,&k<0,\end{cases}
 \qquad
 h_j=\begin{cases}q^{j/p},&j\ge0,\\0,&j<0.\end{cases}
\]
For every $N\ge0$, we have
\[
\begin{aligned}
 (h*y)_N=\sum_{k\in\mathbb Z}h_{N-k}y_k=q^{N/p}\sum_{k=0}^{N}x_k.
\end{aligned}
\]
Young's convolution inequality on $\ell^{p}(\mathbb Z)$ gives
$
 \norm{h*y}_{\ell^{p}}
 \le \norm{h}_{\ell^{1}}\norm{y}_{\ell^{p}}.
$
Therefore,
\[
 \left\{
 \sum_{N=0}^{\infty}q^{N}
 \left(\sum_{k=0}^{N}x_k\right)^{p}
 \right\}^{1/p}
 \le
 \frac{1}{1-q^{1/p}}
 \left(\sum_{k=0}^{\infty}q^{k}x_k^{p}\right)^{1/p},
\] which completes the proof.
\end{proof}

\begin{lemma}\label{lem:subharmonic-log}
For $1\le p<\infty$. Let $h\in H(\D)$ and $\delta>0$. Then
\[
 z\longmapsto \bigl[\log(1+\delta|h(z)|)\bigr]^{p}
\]
is a nonnegative subharmonic function on $\D$.
\end{lemma}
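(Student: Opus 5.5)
The plan is to write $u(z)=\log(1+\delta|h(z)|)$ and to use the standard fact that if $u$ is subharmonic and $\varphi$ is convex and nondecreasing on the range of $u$, then $\varphi\circ u$ is subharmonic. With $\varphi(t)=t^p$ on $[0,\infty)$, which is convex and nondecreasing for $1\le p<\infty$, and with $u\ge0$ clearly, everything reduces to showing that $u$ itself is subharmonic.

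For the subharmonicity of $u$, I will work locally. Away from the zero set of $h$, $\log|h|$ is harmonic, and
\[
 \log(1+\delta|h|)=\log\bigl(1+e^{\log\delta+\log|h|}\bigr).
\]
This is $\psi(v)$ with $v=\log\delta+\log|h|$ harmonic and $\psi(t)=\log(1+e^t)$. Since $\psi'(t)=e^t/(1+e^t)>0$ and $\psi''(t)=e^t/(1+e^t)^2>0$, $\psi$ is convex and increasing, so $\psi(v)$ is subharmonic there. An equivalent check is a direct Laplacian computation: $\Delta\psi(v)=\psi''(v)|\nabla v|^2\ge0$.

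Near a zero $z_0$ of $h$, $v$ equals $-\infty$ at $z_0$, but $u$ is continuous and equals $0$ at $z_0$. The sub-mean value inequality holds trivially at $z_0$, because $u\ge0=u(z_0)$. At every other point, $u$ is subharmonic in a neighbourhood that avoids the isolated zeros. A continuous function satisfying the local sub-mean value property everywhere is subharmonic, so $u$ is subharmonic on $\D$. An alternative route avoids the case split: write $u=\max$-free form $\log|1|+\ldots$, or observe that $\log(1+\delta|h|)=\sup_{|\lambda|\le 1}\ldots$; I expect the local argument to suffice.

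Finally, $u^p=\varphi\circ u$ is continuous and satisfies the sub-mean value property. This follows from Jensen's inequality applied to the circle average together with the monotonicity of $\varphi$:
\[
 u(z)^p\le\Bigl(\tfrac1{2\pi}\int u\Bigr)^p\le \tfrac1{2\pi}\int u^p .
\]
Hence $u^p$ is subharmonic. The only mildly delicate step is the behaviour at the zeros of $h$, which is handled by the positivity observation above.
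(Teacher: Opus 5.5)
Your proof is correct and is essentially the paper's argument. Both rest on the fact that a convex nondecreasing function of a subharmonic function is subharmonic: the paper applies it once, to $\log|h|$ (value $-\infty$ at the zeros) with $\psi(t)=[\log(1+\delta e^t)]^p$, while you factor $\psi$ as $t^p$ composed with $\log(1+\delta e^t)$ and handle the zeros of $h$ directly via the trivial sub-mean inequality at a minimum point. The half-finished remark about an ``alternative route'' plays no role and should be deleted, although the identity $\log(1+\delta|h|)=\sup_{|\lambda|=1}\log|1+\lambda\delta h|$ would indeed give another short proof.
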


\begin{proof}
If $h\equiv0$, the result is immediate. Suppose that $h\not\equiv0$.
By \cite[Theorem~17.3]{Rudin}, the function
\[
 u(z)=\log|h(z)|,\qquad \log0:=-\infty,
\]
is subharmonic in $\D$. Set
\[
 \psi(t)=\bigl[\log(1+\delta e^t)\bigr]^p,
 \qquad t\in\mathbb R,
\]
and $\psi(-\infty)=0$. Then
\[
 \psi'(t)
 =
 p\bigl[\log(1+\delta e^t)\bigr]^{p-1}
 \frac{\delta e^t}{1+\delta e^t}>0,
\]
and
\[
 \psi''(t)
 =
 \frac{
 p\delta e^t
 \bigl[\log(1+\delta e^t)\bigr]^{p-2}
 }{(1+\delta e^t)^2}
 \left[
 (p-1)\delta e^t+\log(1+\delta e^t)
 \right]
 \ge0.
\]
Hence $\psi$ is increasing and convex. By
\cite[Theorem~17.2]{Rudin}, we know that 
$
 \psi(u(z))
 =
 \bigl[\log(1+\delta|h(z)|)\bigr]^p
$
is subharmonic in $\D$.
\end{proof}

\section{Proof  of Theorems~\ref{thm:main} and \ref{thm:symbol-classes}.}\label{sec:derivative}

For $f\in H(\D)$, we first prove an upper estimate for $\norm{f-f(0)}_{N_\alpha^p}^p$, which will be a key tool in the proof of our main results. Set
\[
 I_k=[1-2^{-k},\,1-2^{-k-1}),\qquad k\ge0.
\]
For $\theta\in[0,2\pi)$ and $k\ge0$, define
\[
 b_k(\theta)
 =\sup_{t\in I_k}
 \log(1+2^{-k}|f'(te^{i\theta})|).
\]

\begin{proposition}\label{prop:radial-reduction}
For $1\le p<\infty$, $\alpha>-1$, and $f\in H(\D)$. Then
\begin{equation}\label{eq:radial-reduction}
 \norm{f-f(0)}_{N_{\alpha}^{p}}^{p}
 \le C_{p,\alpha}
 \sum_{k=0}^{\infty}2^{-k(\alpha+1)}
 \int_{0}^{2\pi}b_k(\theta)^{p}\,d\theta.
\end{equation}
\end{proposition}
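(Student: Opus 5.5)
Write $F=f-f(0)$ and use polar coordinates $z=re^{i\theta}$. Split $\D$ into the annuli $\{r\in I_N\}$, $N\ge0$. On the $N$-th annulus we have $(1-r^2)^\alpha\asymp 2^{-N\alpha}$ and the radial length of $I_N$ is $2^{-N-1}$. Hence
\[
 \norm{F}_{N_\alpha^p}^p\lesssim\sum_{N=0}^\infty 2^{-N(\alpha+1)}\int_0^{2\pi}\sup_{r\in I_N}\bigl[\log(1+|F(re^{i\theta})|)\bigr]^p\,d\theta .
\]
The constant depends on $\alpha$; both signs of $\alpha$ are fine because $1-r\asymp 2^{-N}$ on $I_N$.

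The pointwise step bounds $\log(1+|F(re^{i\theta})|)$ for $r\in I_N$ by integrating $f'$ along the radius from $0$ to $r$. Since $r\in I_N$, this radius is covered by $I_0,\dots,I_N$, and $|I_k|\le 2^{-k}$. So
\[
 |F(re^{i\theta})|\le\sum_{k=0}^{N}\int_{I_k}|f'(te^{i\theta})|\,dt\le\sum_{k=0}^N 2^{-k}\sup_{t\in I_k}|f'(te^{i\theta})|=:\sum_{k=0}^N a_k .
\]
Now use subadditivity, $\log(1+\sum a_k)\le\sum\log(1+a_k)$, which is the inequality already quoted in the introduction. Since $\log(1+a_k)=b_k(\theta)$, this gives
\[
 \sup_{r\in I_N}\log(1+|F(re^{i\theta})|)\le\sum_{k=0}^N b_k(\theta).
\]

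Substituting, we obtain
\[
 \norm{F}^p_{N_\alpha^p}\lesssim\int_0^{2\pi}\sum_{N=0}^\infty q^N\Bigl(\sum_{k=0}^N b_k(\theta)\Bigr)^p\,d\theta,\qquad q=2^{-(\alpha+1)}\in(0,1),
\]
where $q<1$ because $\alpha>-1$. Apply Lemma~\ref{lem:weighted-hardy} for each fixed $\theta$, with $x_k=b_k(\theta)$. This bounds the inner sum by $(1-q^{1/p})^{-p}\sum_k q^k b_k(\theta)^p$. Integrating in $\theta$ (using Tonelli) yields \eqref{eq:radial-reduction}, with $C_{p,\alpha}$ depending only on $p$ and $\alpha$.

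The only potential obstacle is measurability of $b_k$ in $\theta$, so that the integrals make sense. Since $f'$ is continuous, the supremum over $t\in I_k$ equals the supremum over rational $t\in I_k$, and $b_k$ is therefore a countable supremum of continuous functions. It is lower semicontinuous, hence measurable. Everything else is the direct chain above; the substantive input is Lemma~\ref{lem:weighted-hardy}, which absorbs the partial sums $\sum_{k\le N}$ at the cost of a constant because the weights $q^N$ decay geometrically.
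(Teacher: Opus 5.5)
Your proof is correct and follows the paper's argument essentially step for step: you integrate $f'$ radially over $I_0,\dots,I_N$, use subadditivity of $\log(1+\cdot)$ (the paper phrases this as $1+\sum s_k\le\prod(1+s_k)$), apply the layer-mass estimate $2^{-N(\alpha+1)}$, and invoke Lemma~\ref{lem:weighted-hardy} with $q=2^{-(\alpha+1)}$. The differences are cosmetic: you sum over all annuli at once via Tonelli instead of truncating at $|z|<1-2^{-M-1}$ and using monotone convergence, and you add a measurability check for $b_k$ that the paper omits.
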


\begin{proof}
Fix $\theta$, let $N\ge0$ be an integer, and take $r\in I_N$. Then it is easy to see that
\[
 |f(re^{i\theta})-f(0)|
 \le \sum_{k=0}^{N}\int_{I_k}|f'(te^{i\theta})|\,dt.
\]
Since $|I_k|=2^{-k-1}$, for every $k\ge0$, we get
\[
 1+\int_{I_k}|f'(te^{i\theta})|\,dt
 \le 1+2^{-k}\sup_{t\in I_k}|f'(te^{i\theta})|
 \le e^{b_k(\theta)}.
\]
Using the elementary inequality
\[
 1+\sum_{k=0}^{N}s_k\le\prod_{k=0}^{N}(1+s_k),
 \qquad s_k\ge0,
\]
we obtain
\begin{equation}\label{eq:pointwise-layer-bound}
 \log(1+|f(re^{i\theta})-f(0)|)
 \le \sum_{k=0}^{N}b_k(\theta),
 \qquad r\in I_N.
\end{equation}

For $N\ge1$ and $r\in I_N$, we have
\begin{equation}\label{eq:layer-mass}
 \int_{I_N}r(1-r^2)^\alpha\,dr
 \asymp_\alpha 2^{-N(\alpha+1)}.
\end{equation}
Thus, for $M\ge0$, using polar coordinates, \eqref{eq:pointwise-layer-bound}, and \eqref{eq:layer-mass} gives,
\begin{align*}
 &\int_{|z|<1-2^{-M-1}}
 [\log(1+|f(z)-f(0)|)]^p\,dA_\alpha(z)\le C_\alpha\sum_{N=0}^{M}2^{-N(\alpha+1)}
 \int_0^{2\pi}\left(\sum_{k=0}^{N}b_k(\theta)\right)^p d\theta.
\end{align*}
For each fixed $\theta$, taking 
$q=2^{-(\alpha+1)}$ in Lemma~\ref{lem:weighted-hardy}, we get
\[
 \sum_{N=0}^{M}2^{-N(\alpha+1)}\left(\sum_{k=0}^{N}b_k(\theta)\right)^p
 \le C_{p,\alpha}\sum_{k=0}^{M}2^{-k(\alpha+1)} b_k(\theta)^p.
\]
Therefore,
\[
 \int_{|z|<1-2^{-M-1}}
 [\log(1+|f(z)-f(0)|)]^p\,dA_\alpha(z)
 \le C_{p,\alpha}\sum_{k=0}^{M}2^{-k(\alpha+1)}
 \int_0^{2\pi}b_k(\theta)^p\,d\theta.
\]
Letting $M\to\infty$ and using monotone convergence theorem proves \eqref{eq:radial-reduction}.
\end{proof}

We now prove Theorem \ref{thm:main}.

\begin{proof}[Proof of \cref{thm:main}]
By Proposition~\ref{prop:radial-reduction}, it remains to prove the following inequality
\begin{equation}\label{eq:maxima-to-area}
 \sum_{k=0}^{\infty}2^{-k(\alpha+1)}
 \int_{0}^{2\pi}b_k(\theta)^p\,d\theta
 \le C_{p,\alpha}
 \int_{\D}
 \bigl[\log(1+(1-|z|)|f'(z)|)\bigr]^p
 \,dA_\alpha(z).
\end{equation}

Let $k\ge0$ and fix a sufficiently small absolute constant
$c\in(0,1/2)$. For $\theta\in[0,2\pi)$, let
\[
T_{k,\theta}
=\left\{w\in\D:
\operatorname{dist}\!\left(
w,\{te^{i\theta}:t\in I_k\}
\right)<c\,2^{-k}\right\}.
\]
For every $z=te^{i\theta}$ with $t\in I_k$, we have
$
1-|z|>2^{-k-1}>c\,2^{-k},
$
and so
$
D(z,c\,2^{-k})\subset\D.
$
Moreover, by the definition of $T_{k,\theta}$, we also have
$
D(z,c\,2^{-k})\subset T_{k,\theta}.
$
Therefore, by Lemma~\ref{lem:subharmonic-log} and the subharmonic
mean inequality, we obtain
\begin{equation}\label{eq:bk-tube}
\begin{aligned}
b_k(\theta)^p
&=\sup_{t\in I_k}
\bigl[\log(1+2^{-k}|f'(te^{i\theta})|)\bigr]^p\\
&\le
\sup_{t\in I_k}
\frac{C}{(c\,2^{-k})^2}
\int_{D(te^{i\theta},c\,2^{-k})}
\bigl[\log(1+2^{-k}|f'(w)|)\bigr]^p\,dA(w)\\
&\le C\,2^{2k}\int_{T_{k,\theta}}
\bigl[\log(1+2^{-k}|f'(w)|)\bigr]^p\,dA(w).
\end{aligned}
\end{equation}

If $w\in T_{k,\theta}$, then there exists $t\in I_k$ such that
$
\bigl||w|-t\bigr|
\le |w-te^{i\theta}|<c\,2^{-k}.
$
Hence
\[
\left(\frac12-c\right)2^{-k}
<1-|w|<(1+c)2^{-k},
\]
so $w\in\widehat A_k$, where
$
\widehat A_k
=\left\{w\in\D:
\left(\frac12-c\right)2^{-k}<1-|w|<(1+c)2^{-k}
\right\}.
$

For $k\ge1$, if $w\in T_{k,\theta}$, choose $t\in I_k$ such that
$|w-te^{i\theta}|<c\,2^{-k}$. Then
\[
|w|\,|e^{i\theta}-e^{i\arg w}|
\le \bigl||w|-t\bigr|+|te^{i\theta}-w|
<2c\,2^{-k},
\]
and
\[
|w|
\ge t-c\,2^{-k}
\ge \frac12-\frac c2
=\frac{1-c}{2}>0.
\]
Hence
$
\min_{m\in\mathbb Z}
|\theta-\arg w-2\pi m|
\le \frac{\pi}{2}|e^{i\theta}-e^{i\arg w}|
\le C\,2^{-k}.
$
For $k=0$,
\[
\bigl|\{\theta\in[0,2\pi):w\in T_{0,\theta}\}\bigr|
\le 2\pi.
\]
Thus, for all $k\ge0$, we have
$
\bigl|\{\theta\in[0,2\pi):w\in T_{k,\theta}\}\bigr|
\le C\,2^{-k}.
$

Integrating \eqref{eq:bk-tube} in $\theta$ and applying Tonelli's theorem, we obtain
\begin{align*}
2^{-k(\alpha+1)}\int_0^{2\pi}b_k(\theta)^p\,d\theta
&\le C\,2^{k-k\alpha}
\int_{\widehat A_k}\bigl[\log(1+2^{-k}|f'(w)|)\bigr]^p
\bigl|\{\theta\in[0,2\pi):w\in T_{k,\theta}\}\bigr|\,dA(w)\\
&\le C\,2^{-k\alpha}\int_{\widehat A_k}
\bigl[\log(1+2^{-k}|f'(w)|)\bigr]^p\,dA(w).
\end{align*}
On $\widehat A_k$, it is easy to see that
$
2^{-k\alpha}\asymp_\alpha(1-|w|^2)^\alpha.
$
Also, for $s\ge0$ and $\lambda>0$,
\begin{align}\label{A1}
\log(1+\lambda s)\le\max\{1,\lambda\}\log(1+s).
\end{align}
For $0<\lambda\le1$ this follows from monotonicity, while for
$\lambda\ge1$ it follows from $1+\lambda s\le(1+s)^\lambda$.
Together with $2^{-k}\asymp1-|w|$, this implies
\begin{equation}\label{eq:single-layer-final}
2^{-k(\alpha+1)}\int_0^{2\pi}b_k(\theta)^p\,d\theta
\le C_{p,\alpha}\int_{\widehat A_k}
[\log(1+(1-|w|)|f'(w)|)]^p\,dA_\alpha(w).
\end{equation}
For $w\in\widehat A_k$,
\[
\frac{1-|w|}{1+c}<2^{-k}
<\frac{1-|w|}{1/2-c},
\qquad
\sum_{k=0}^{\infty}\chi_{\widehat A_k}(w)\le C,
\quad w\in\D.
\]
Hence, by \eqref{eq:single-layer-final},
\[
\begin{aligned}
\sum_{k=0}^{\infty}
2^{-k(\alpha+1)}
\int_0^{2\pi}b_k(\theta)^p\,d\theta
&\le
C_{p,\alpha}
\sum_{k=0}^{\infty}
\int_{\widehat A_k}
[\log(1+(1-|w|)|f'(w)|)]^p\,dA_\alpha(w)\\
&=
C_{p,\alpha}
\int_{\D}
[\log(1+(1-|w|)|f'(w)|)]^p
\sum_{k=0}^{\infty}\chi_{\widehat A_k}(w)\,dA_\alpha(w)\\
&\le
C_{p,\alpha}
\int_{\D}
[\log(1+(1-|w|)|f'(w)|)]^p\,dA_\alpha(w).
\end{aligned}
\]
This proves \eqref{eq:maxima-to-area}.

Combining \eqref{eq:radial-reduction} and \eqref{eq:maxima-to-area}, we obtain
\[
\norm{f-f(0)}_{N_{\alpha}^{p}}^p
\le C_{p,\alpha}
\int_{\D}
\bigl[\log(1+(1-|z|)|f'(z)|)\bigr]^p
\,dA_\alpha(z).
\]
This is the left-hand inequality in \eqref{eq:main-two-sided}. The right-hand inequality is \eqref{eq:CKS-direct}, proved in \cite[Remarks~4.4(3)]{CKS}.
\end{proof}

We now prove Theorem~\ref{thm:symbol-classes}. 

\begin{proof}[Proof of Theorem~\ref{thm:symbol-classes}]
%We shall use the elementary inequality
%\begin{align}\label{A1}
% \log(1+\lambda s)\le\max\{1,\lambda\}\log(1+s),
% \qquad s\ge0,\ \lambda>0.
%\end{align}
%For $0<\lambda\le1$ this follows from monotonicity, and for $\lambda\ge1$ it follows from $1+\lambda s\le(1+s)^\lambda$.

\textbf{The class $J_{\mb}[N_\alpha^p]$:}
Let $g\in\mathcal B$, set
$\beta_g=\sup_{z\in\D}(1-|z|^2)|g'(z)|$, and fix $f\in N_{\alpha}^{p}$. 
The left-hand inequality in \eqref{eq:main-two-sided}, applied to $J_gf$, gives
\begin{align*}
 \norm{J_gf}_{N_{\alpha}^{p}}^p
 &\le C_{p,\alpha}
 \int_{\D}
 \bigl[\log(1+(1-|z|)|f(z)g'(z)|)\bigr]^p
 \,dA_\alpha(z)\\
 &\le C_{p,\alpha}
 \int_{\D}
 \bigl[\log(1+\beta_g|f(z)|)\bigr]^p
 \,dA_\alpha(z).
\end{align*}
By \eqref{A1}, we have
\[
 \log(1+\beta_g|f(z)|)
 \le\max\{1,\beta_g\}\log(1+|f(z)|).
\]
Therefore
\[
 \norm{J_gf}_{N_{\alpha}^{p}}
 \le C_{p,\alpha}^{1/p}\max\{1,\beta_g\}
 \norm{f}_{N_{\alpha}^{p}}.
\]
Thus $g\in J_{\mb}[N_{\alpha}^{p}]$, that is
$
 \mathcal B\subset J_{\mb}[N_{\alpha}^{p}].
$
The reverse inclusion
$
 J_{\mb}[N_{\alpha}^{p}]\subset\mathcal B
$
is \cite[Proposition 4.3(b)]{CKS}. Hence
\[
 J_{\mb}[N_{\alpha}^{p}]=\mathcal B.
\]

\textbf{The class $I_{\mb}[N_\alpha^p]$:}
Let $g\in H^{\infty}$ and fix $f\in N_\alpha^p$.  Similar to the proof of $J_g$, by 
 \eqref{eq:main-two-sided}, we get
\begin{align*}
 \norm{I_gf}_{N_{\alpha}^{p}}^p
 &\le C_{p,\alpha}
 \int_{\D}
 \bigl[\log(1+(1-|z|)|g(z)f'(z)|)\bigr]^p
 \,dA_\alpha(z)\\
 &\le C_{p,\alpha}\max\{1,\norm{g}_{\infty}\}^p
 \int_{\D}
 \bigl[\log(1+(1-|z|)|f'(z)|)\bigr]^p
 \,dA_\alpha(z)\\
 &\le C_{p,\alpha}\max\{1,\norm{g}_{\infty}\}^p
 \norm{f-f(0)}_{N_{\alpha}^{p}}^p.
\end{align*}
Since $[\log(1+|f|)]^p$ is subharmonic, we have
$
 \log(1+|f(0)|)\le \norm{f}_{N_\alpha^p}.
$
Consequently, by the triangle inequality, we obtain
\[
 \norm{f-f(0)}_{N_\alpha^p}
 \le \norm{f}_{N_\alpha^p}+\log(1+|f(0)|)
 \le 2\norm{f}_{N_\alpha^p}.
\]
It follows that
\[
 \norm{I_gf}_{N_{\alpha}^{p}}
 \le C_{p,\alpha}^{1/p}2\max\{1,\norm{g}_{\infty}\}
 \norm{f}_{N_{\alpha}^{p}}.
\]
Thus,
$
 H^{\infty}\subset I_{\mb}[N_{\alpha}^{p}].
$
The reverse inclusion
$
 I_{\mb}[N_{\alpha}^{p}]\subset H^{\infty}
$
is \cite[Proposition 4.3(c)]{CKS}. Therefore
\[
 I_{\mb}[N_{\alpha}^{p}]=H^{\infty}.
\]
This completes the proof.
\end{proof}

\section*{Declarations}

\noindent\textbf{Data availability}
No data were used for the research described in this article.

%\medskip
%\noindent\textbf{Conflict of interest.}
%The authors declare no conflict of interest.

\end{document}